\renewcommand*\subjclass[2][2000]{%
  \def\@subjclass{#2}%
  \@ifundefined{subjclassname@#1}{%
    \ClassWarning{\@classname}{Unknown edition (#1) of Mathematics
      Subject Classification; using '1991'.}%
  }{%
    \@xp\let\@xp\subjclassname\csname subjclassname@#1\endcsname
  }%
}
\newtheorem{theorem}{Theorem}[section]
\newtheorem{lemma}[theorem]{Lemma}
\newtheorem*{lemma*}{Lemma}
\newtheorem*{remark*}{Remark}
\theoremstyle{definition}
\theoremstyle{remark}
\newtheorem{remark}[theorem]{Remark}
\numberwithin{equation}{section}
\DeclareMathOperator{\diam}{\diam}
\def\XXint#1#2#3{{\setbox0=\hbox{$#1{#2#3}{\int\limits}$}
\vcenter{\hbox{$#2#3$}}\kern-.5\wd0}}
\begin{document}

\title{H\"older continuity of quasiconformal harmonic mappings from the unit ball to a spatial domain with $C^1$ boundary}
\subjclass{Primary 31B05;
Secondary 30C65}

\keywords{Harmonic mappings, Quasiconformal mappings, harmonic $\beta$-Bloch space, H\"{o}lder continuity}

\address{University of Montenegro, Faculty of Natural Sciences and
Mathematics, Cetinjski put b.b. 81000 Podgorica, Montenegro}
\email{antondj@ucg.ac.me}

 \author{Anton Gjokaj}

\begin{abstract}
We prove that every quasiconformal mapping from the harmonic $\beta$-Bloch space between the unit ball and a spatial domain with $C^1$ boundary is globally $\alpha$-H\"older continuous for $\alpha<1-\beta$, with the H\"older coefficient that does not depend neither on the mapping nor on $\beta$. An analogous result also holds for Lipschitz continuous, quasiconformal harmonic mappings for $\alpha <1$. This extends some results from the complex plane obtained by Warschawski in \cite{Warschawski} for conformal mappings and Kalaj in \cite{Kalaj6} for quasiconformal harmonic mappings. 
\end{abstract}
\maketitle

\section{Introduction} 

Let $U$ be a domain in the standard Euclidean space $\mathbf{R}^n$. We say $f=(f_1,\ldots,f_n):U\to \mathbf{R}^n$ is a harmonic mapping if for $j=1,\ldots,n$, $f_j$ is a real-valued harmonic function in $U$, i.e. it satisfies the Laplace equation $\Delta f_j=0.$

We denote by $\mathbf{B}$ the unit ball  $\{x \in \mathbf{R}^n: |x|<1\}$ in $\mathbf{R}^n$ and by $S$ the unit sphere $\{x \in \mathbf{R}^n: |x|=1\}$.
Let $P:\mathbf{B}\times S \to \mathbf{R}$ 
\[ P(x,\xi)=\frac{1-|x|^2}{|x-\xi|^n}\]
be the Poisson kernel for $\mathbf{B}$ and
\[ P[u](x)=\int\limits_{S} P(x,\xi) u(\xi) d\sigma (\xi) \]
be the Poisson integral of a continuous function $u$ on $S$, where $\sigma$ denotes the normalized surface-area measure on $S$. Then $P[u](x)$ is the harmonic function on $\mathbf{B}$, which is continuous on $\overline{\mathbf{B}}$ and which agrees with $u$ on $S$. In case we consider real harmonic functions $u$ on $\mathbf{B}$ with continuous extension to the boundary, it is useful to express them as
\[ u=P[u|_{S}](x).\]

Let $\alpha \in (0,1)$. A function $F:U\subset \mathbf{R}^n \to \mathbf{R}^m$ is said to be $\alpha$-H\"older continuous, $F\in C^{\alpha}(U)$ if
\[ \sup_{x,y\in U, x\neq y} \frac{|F(x)-F(y)|}{|x-y|^{\alpha}}<\infty.\]
The above supremum is called the H\"older coefficient of function $F$ on $U$.

A homeomorphism $f:U\to V$, where $U,V$ are domains in $\mathbf{R}^n$, will be called $K$ quasiconformal (see \cite{Vaisala}) $(K\geq 1)$ if $f$ is absolutely continuous on lines (i.e. absolutely continuous in almost every segment parallel to some of the coordinate axes and there exist partial derivatives which are locally $L^{n}$ integrable in $U$) and
\[ |\nabla f(x)|\leq K l(\nabla f(x)),  \]
for all points $x\in U$, where
\[ |\nabla f(x)|=\sup\{|f'(x)h|: |h|=1\}, \]
\[ l(\nabla f(x))=\inf\{|f'(x) h|: |h|=1\}. \]
 
Let $\beta \in (0,1)$. The harmonic $\beta$-Bloch space is the space of harmonic functions $u=u(x):\mathbf{B}\subset \mathbf{R}^n \to \mathbf{R}$ with the property 
\begin{equation*}  \sup\limits_{x\in \mathbf{B}}  (1-|x|)^{\beta}|\nabla u(x)|<\infty. 
\end{equation*}

We say that a domain $\Omega \subset \mathbf{R}^n$ has $C^1$ boundary if there is a  $C^1$ diffeomorphism $\Phi:\overline{B}\to \overline{\Omega}$. 
\\[0.2cm]

The motivation for the results of this paper comes primarily from the results in \cite{Warschawski} and \cite{Kalaj6} obtained in the complex plane. In \cite{Warschawski}, for every $\alpha \in (0,1)$ it was proved the $\alpha$-H\"older continuity of conformal mappings $f$ from the unit disk onto a Jordan domain $\Omega$ with $C^1$  boundary curve $\gamma$, fixing the origin. The H\"older coefficient in that case depends on properties of $\gamma$ and $\alpha$. In \cite{Kalaj6} this result was extended to the class of quasiconformal harmonic mappings $f$ between Jordan domains $D$ and $\Omega$ with $C^1$ boundaries, with the H\"older coefficient not depending on $f$, but on the properties of boundary curves, on the coefficient of quasiconformality and on two fixed points, $a\in D$ and $b\in \Omega$, such that $f(a)=b.$ 

Since the work of Martio in \cite{Martio}, the problem of H\"older and Lipschitz continuity of harmonic or/and quasiconformal mappings between domains with given properties has aroused a large interest for experts in this theory. For the complex plane there are many interesting results concerning this topic with different settings on domain and codomain.  One of the most important results is in \cite{Pavlovic1} where Pavlovic proved the bi-Lipschitz continuity of harmonic quasiconformal mapping from the unit disk onto itself. Other interesting results can be found in \cite{Kalaj1}, \cite{Kalaj2}, \cite{Kalaj5}, \cite{KalajMat},\cite{Manojlovic}, \cite{MartioNaki}, \cite{MatVuo}, \cite{Partyka}, \cite{Zhu} and references cited therein.

Analogous problems in the space are much more complicated because
of the lack of the techniques of complex analysis. Indeed, the family of conformal mappings on the space coincides with M\"obius transformations. Therefore, for higher dimensional case there are fewer results. Astala and Manojlovic in \cite{Astala} proved the Lipschitz continuity of quasiconformal harmonic mappings of the unit ball onto itself in $\mathbf{R}^n$, partly extending the result from \cite{Pavlovic1}, and the bi-Lipschitz property for $n=3$ in case the mapping is also a harmonic gradient mapping. Kalaj in \cite{Kalaj3} proved the Lipschitz continuity of quasiconformal mappings, satisfying Poisson differential inequality, from the unit ball onto a domain with $C^2$ smooth boundary. In \cite{Kalaj7} it was proven the Lipschitz continuity of harmonic quasiconformal mappings 
$\{ u=P[f]: f\in C^{1,\alpha}, u(0)=0\}$ of the unit ball onto itself.  In \cite{GjoKal} it was proven the Lipschitz continuity of harmonic quasiconformal mappings from the unit ball to a spatial domain with $C^{1,\alpha}$ boundary.  
For other results see also \cite{Arsenovic}, \cite{AKM}, \cite{MatVuo}.

Our goal is to extend the results from \cite{Warschawski} and \cite{Kalaj6} in an appropriate way in space. The result reads as follows. 

\begin{theorem} \label{teorema}
Let $\Omega$ be a spatial domain with $C^1$ boundary, $b\in \Omega$ and $\beta \in (0,1)$ fixed. For every $\alpha \in (0,1-\beta)$ and $K \geq 1$, there is a constant $M=M(\alpha, b, K, \Omega)$ such that every K-quasiconformal mapping $f:\mathbf{B}\to \Omega$ that belongs to the harmonic $\beta$-Bloch space and satisfies $f(0)=b$ is $\alpha$-H\"older continuous with H\"older coefficient $M$, i.e.:
\[ |f(x)-f(y)|\leq M |x-y|^{\alpha}, \] 
for all $x,y \in \mathbf{B}$. 
\end{theorem}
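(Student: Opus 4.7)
The plan is to establish a uniform gradient estimate of the form
\[
|\nabla f(x)| \leq C(\alpha, K, b, \Omega)\,(1-|x|)^{\alpha-1}\qquad (x\in\mathbf{B}),
\]
and then integrate along suitable curves to produce the $\alpha$-Hölder estimate. The essential difficulty is that the $\beta$-Bloch hypothesis only supplies an $f$-dependent bound $(1-|x|)^{\beta}|\nabla f(x)|\leq C_f$; the work is to upgrade $C_f$ to an $f$-independent constant, at the cost of losing an arbitrarily small amount in the exponent (i.e., replacing $\beta$ by some $1-\alpha>\beta$).

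First I would invoke the standard quasiconformal boundary extension theorem: since $\Omega$ has $C^1$ (hence locally connected) boundary, every $K$-quasiconformal $f:\mathbf{B}\to\Omega$ extends to a homeomorphism $\overline{f}:\overline{\mathbf{B}}\to\overline{\Omega}$, so that $f=P[f|_S]$. Writing
\[
f(x)-f(x^*)=\int_S P(x,\xi)\,\bigl(f(\xi)-f(x^*)\bigr)\,d\sigma(\xi),\qquad x^*:=x/|x|,
\]
and differentiating under the integral yields the standard bound
\[
|\nabla f(x)|\leq \frac{C_n}{1-|x|}\int_S P(x,\xi)\,|f(\xi)-f(x^*)|\,d\sigma(\xi),
\]
so the problem reduces to controlling the oscillation of $f|_S$ over boundary patches uniformly in $f$. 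For this I would combine two ingredients. On the one hand, the $C^1$ regularity of $\partial\Omega$ together with quasiconformal modulus-of-curve-families estimates (in the spirit of V\"ais\"al\"a) supply a uniform modulus of continuity $\omega_0$ for $f|_S$ depending only on $K,b,\Omega$. On the other hand, the $\beta$-Bloch hypothesis, together with the interior gradient bound applied along short radial segments, forces $f|_S$ to be $(1-\beta)$-H\"older --- with an a priori $f$-dependent constant. A dichotomy/interpolation argument between these two estimates then yields, for every $\alpha<1-\beta$, a uniform $\alpha$-H\"older estimate for $f|_S$ with constant depending only on $(\alpha, K, b, \Omega)$: any inflation of the Bloch constant would, via the Poisson representation, produce a large boundary oscillation on small caps, contradicting the uniform bound $\omega_0$.

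Plugging this uniform boundary H\"older estimate back into the Poisson gradient bound produces $|\nabla f(x)|\leq C(1-|x|)^{\alpha-1}$, and standard integration along piecewise quasi-radial paths which stay at distance $\sim |x-y|$ from $\partial\mathbf{B}$ yields $|f(x)-f(y)|\leq M|x-y|^{\alpha}$ for all $x,y\in\mathbf{B}$. The hardest step is the uniform boundary modulus: under only $C^1$ (not $C^{1,\alpha}$) boundary regularity the tangent field of $\partial\Omega$ possesses merely a modulus of continuity rather than a H\"older estimate, so the QC distortion inequalities and the $\beta$-Bloch hypothesis have to be combined very carefully to control all constants uniformly in $f$ while still achieving H\"older exponent arbitrarily close to $1-\beta$.
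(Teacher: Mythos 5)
Your overall architecture (uniform boundary H\"older bound $\Rightarrow$ weighted gradient bound $\Rightarrow$ global H\"older estimate via integration, with Mori's theorem supplying a uniform but weak modulus $\omega_0$ and the $\beta$-Bloch hypothesis supplying a strong but $f$-dependent $(1-\beta)$-H\"older bound) matches the skeleton of the paper, but the step you label ``a dichotomy/interpolation argument'' is precisely where the whole difficulty lives, and as stated it does not work. A uniform $C^{\alpha_0}$ bound (Mori gives a possibly very small exponent $\alpha_0=K_1^{1/(1-n)}$) together with a non-uniform $C^{1-\beta}$ bound does not interpolate to a uniform $C^{\alpha}$ bound for $\alpha_0<\alpha<1-\beta$: the model functions $g_\lambda(t)=\lambda^{\alpha_0}\phi(t/\lambda)$ have $\|g_\lambda\|_{C^{\alpha_0}}\sim 1$, finite Lipschitz norm, and $\|g_\lambda\|_{C^\alpha}\sim\lambda^{\alpha_0-\alpha}\to\infty$. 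Your proposed contradiction (``inflation of the Bloch constant produces large oscillation on small caps, contradicting $\omega_0$'') fails quantitatively for the same reason: if the weighted supremum $A=\sup(1-|x|)^{1-\alpha}|\nabla f_n(x)|$ is attained at distance $\rho$ from the boundary, the Poisson gradient estimate against $\omega_0$ only yields $A\lesssim M_0\rho^{\alpha_0-\alpha}$, which is no contradiction when $\rho$ is small. Some genuinely geometric input beyond the two moduli of continuity is required to forbid this concentration.

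The paper's mechanism, which is absent from your proposal, is the following absorption argument. The Bloch hypothesis guarantees that $A$ is \emph{attained} at an interior point $r\eta_0$. One rotates coordinates so that the tangent plane of $\partial\Omega$ at $f(\eta_0)$ is horizontal; the $C^1$ regularity then gives, on a spherical cap $S_\delta(\eta_0)$ of \emph{uniform} radius (this is where Mori/Lemma~2 is used), the flatness estimate $|f_n(\xi)|\le\epsilon\,|(f_1(\xi),\dots,f_{n-1}(\xi))|$. Quasiconformality, applied to the transpose of $f'(x)$, yields $|\nabla f_j(x)|\le K|\nabla f_n(x)|$, so by Hardy--Littlewood the tangential boundary oscillations are controlled by $KC(\alpha)A\,|\xi-\eta_0|^\alpha$. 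Feeding this into the Poisson gradient formula at the extremal point $r\eta_0$ and splitting the integral over $S_\delta(\eta_0)$ and its complement gives $A\le \epsilon\,c(n,K,\alpha)A + C(\delta,\Omega,n)$, and choosing $\epsilon$ so that $\epsilon\,c\le\frac12$ absorbs the first term and produces the uniform bound on $A$. Without this coupling between the normal component (small on the cap by $C^1$ flatness) and the tangential components (controlled by $KA$ by quasiconformality), the uniform constant cannot be extracted, so your proof has a genuine gap at its central step.
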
 

We give an additional result. The condition of Bloch space in Theorem \ref{teorema} can be replaced with Lipschitz continuity, attaining $\alpha$-H\"older continuity, in this case for $\alpha \in (0,1)$, but again with "uniform" H\"older coefficient. 

\begin{theorem} \label{teorema2} Let $\Omega$ be a spatial domain with $C^1$ boundary and $b\in \Omega$. For every $\alpha \in (0,1)$ and $K \geq 1$, there is a constant $M=M(\alpha, b, K, \Omega)$ such that every K-quasiconformal harmonic mapping $f:\mathbf{B}\to \Omega$ which is Lipschitz continuous in $\mathbf{B}$ and satisfies $f(0)=b$ is $\alpha$-H\"older continuous with H\"older coefficient $M$.

\end{theorem}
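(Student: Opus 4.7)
The plan is to obtain Theorem \ref{teorema2} as a direct corollary of Theorem \ref{teorema}, using the fact that a harmonic Lipschitz mapping of $\mathbf{B}$ is automatically in every harmonic $\beta$-Bloch space. This lets us invoke Theorem \ref{teorema} with $\beta$ chosen freely close to $0$, covering the whole range $\alpha\in(0,1)$.

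First I would record the pointwise gradient bound that Lipschitz continuity yields in the harmonic setting. Since $f$ is harmonic and hence smooth on $\mathbf{B}$, Lipschitz continuity with some constant $L_f$ gives
\[ |\nabla f(x)v|=\lim_{t\to 0}\frac{|f(x+tv)-f(x)|}{t}\le L_f \]
for every $x\in\mathbf{B}$ and every unit vector $v$, so $|\nabla f(x)|\le L_f$ on $\mathbf{B}$. Consequently, for \emph{any} $\beta\in(0,1)$,
\[ \sup_{x\in\mathbf{B}}(1-|x|)^\beta|\nabla f(x)|\le L_f<\infty, \]
and $f$ belongs to the harmonic $\beta$-Bloch space.

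Next, given $\alpha\in(0,1)$, I would pick any $\beta\in(0,1-\alpha)$, concretely $\beta=(1-\alpha)/2$. Then $\alpha\in(0,1-\beta)$, $f$ is a $K$-quasiconformal harmonic mapping with $f(0)=b$ lying in the $\beta$-Bloch space, and Theorem \ref{teorema} produces a constant $M(\alpha,b,K,\Omega)$ (the dependence on $\beta$ being absorbed, since $\beta$ is a function of $\alpha$) such that
\[ |f(x)-f(y)|\le M|x-y|^\alpha,\qquad x,y\in\mathbf{B}. \]
The punchline is that $M$ is \emph{independent} of $L_f$, so the Hölder coefficient is uniform over the whole admissible class.

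The essential (and really only) point to verify is that the constant delivered by Theorem \ref{teorema} is genuinely independent of the $\beta$-Bloch norm of $f$, i.e.\ that $L_f$ does not enter quantitatively through any step of its proof. Once the uniformity of Theorem \ref{teorema} is accepted, the deduction above is immediate; the whole substance of Theorem \ref{teorema2} lies in combining this uniformity with the freedom to let $\beta\downarrow 0$ so as to recover the full range $\alpha\in(0,1)$.
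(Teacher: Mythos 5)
Your proposal is correct, but it is packaged differently from the paper's argument. The paper does not deduce Theorem \ref{teorema2} from the \emph{statement} of Theorem \ref{teorema}; it reruns the \emph{proof}, observing that the Bloch hypothesis enters only at a single qualitative step, namely in showing that the supremum
\[ A=\sup_{x\in\mathbf{B}}(1-|x|)^{1-\alpha}|\nabla f_n(x)| \]
is attained at an interior point $r\eta_0$ of $\mathbf{B}$ (the function $(1-|x|)^{1-\alpha}|\nabla f_n(x)|$ extends continuously to $\overline{\mathbf{B}}$ and vanishes on $S$). Under the Lipschitz hypothesis the gradient is bounded, so this extension works for every $\alpha\in(0,1)$, and the rest of the argument goes through verbatim. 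Your route instead notes that a Lipschitz harmonic map lies in every harmonic $\beta$-Bloch space and invokes Theorem \ref{teorema} with, say, $\beta=(1-\alpha)/2$, so that $\alpha<1-\beta$. This corollary-style shortcut is legitimate precisely because the constant $M$ in Theorem \ref{teorema} is asserted to depend only on $\alpha,b,K,\Omega$ --- not on $\beta$ and not on the Bloch seminorm of $f$ --- and an inspection of that proof confirms this: $\beta$ and the seminorm are used only to locate the maximizing point, never quantitatively. You correctly identified this uniformity as the one point requiring verification. What your version buys is brevity and a clean logical dependence on Theorem \ref{teorema} as a black box; what the paper's version buys is transparency about \emph{why} the constant is uniform, since it exhibits exactly where the regularity hypothesis is consumed.
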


\section{Auxiliary results} 

We collect some important results. The next lemma is an extension of the well-known Hardy-Littlewood theorem (see \cite[page 411-414]{Goluzin}).
\begin{lemma}\label{lema1}
Let $u:\overline{\mathbf{B}}\subset \mathbf{R}^n \to \mathbf{R}$, be a real harmonic function in $\mathbf{B}$ with continuous extension in $\overline{\mathbf{B}}$, and $\alpha \in (0,1)$.  
We define
$$X=\sup_{ \xi,\eta \in S; \xi\neq\eta} \frac{|u(\xi)-u(\eta)|}{|\xi-\eta|^{\alpha}}$$ and 
$$Y=\sup_{x \in \mathbf{B}} (1-|x|)^{1-\alpha} |\nabla u(x)|.$$
Then there is a constant $C=C(\alpha)>0$ such that 
$$\frac1C X \leq Y \leq C X.$$
\end{lemma}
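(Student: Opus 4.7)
The plan is to prove the two inequalities $Y \leq CX$ and $X \leq CY$ separately; the constant in the lemma can then be taken as the maximum of the two constants obtained. Both halves use the Poisson representation $u = P[u|_S]$ and standard estimates for the Poisson kernel.

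For $Y \leq CX$, I would fix $x \in \mathbf{B}$ with $|x| \geq 1/2$, let $\xi^\ast = x/|x|$, and write
\[
\nabla u(x) = \int_S \nabla_x P(x,\xi)\bigl(u(\xi)-u(\xi^\ast)\bigr)\,d\sigma(\xi),
\]
using $\int_S \nabla_x P(x,\xi)\,d\sigma(\xi) = \nabla_x 1 = 0$. A routine computation of $\nabla_x P$ combined with the bound $1-|x|^2 \leq 2|x-\xi|$ gives $|\nabla_x P(x,\xi)| \leq C_n/|x-\xi|^n$. Invoking the definition of $X$ and the standard identity $|\xi-x|^2 = (1-|x|)^2 + |x|\,|\xi-\xi^\ast|^2$ yields
\[
|\nabla u(x)| \leq C_n X \int_S \frac{|\xi-\xi^\ast|^\alpha}{|x-\xi|^n}\,d\sigma(\xi).
\]
Parametrising the sphere by $t = |\xi-\xi^\ast|$, with $d\sigma \sim t^{n-2}\,dt$ and $|x-\xi| \gtrsim (1-|x|) + t$, reduces the estimate to
\[
\int_0^2 \frac{t^{n-2+\alpha}}{((1-|x|)+t)^n}\,dt \leq \frac{C(\alpha)}{(1-|x|)^{1-\alpha}},
\]
obtained by splitting at $t = 1-|x|$. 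This gives $(1-|x|)^{1-\alpha}|\nabla u(x)| \leq C X$ for $|x| \geq 1/2$; for $|x| < 1/2$, interior gradient estimates for harmonic functions together with the oscillation bound $\mathrm{osc}_S\, u \leq 2^\alpha X$ finish the job.

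For $X \leq CY$, I would take $\xi,\eta \in S$ with $\delta := |\xi-\eta| \leq 1/2$ and insert the intermediate points $x_\xi = (1-\delta)\xi$, $x_\eta = (1-\delta)\eta$. Integrating $|\nabla u|$ along the radial segment from $\xi$ to $x_\xi$ gives
\[
|u(\xi)-u(x_\xi)| \leq Y\int_0^\delta s^{\alpha-1}\,ds = \frac{Y}{\alpha}\delta^\alpha,
\]
and the analogous bound for $\eta$. To estimate $|u(x_\xi)-u(x_\eta)|$, observe that by convexity of the closed ball of radius $1-\delta$, every point on the chord satisfies $1-|z| \geq \delta$; since $|x_\xi-x_\eta| = (1-\delta)\delta \leq \delta$, integration along the chord gives $|u(x_\xi)-u(x_\eta)| \leq Y\delta^{\alpha-1}\cdot\delta = Y\delta^\alpha$. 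Combining these yields $|u(\xi)-u(\eta)| \leq C(\alpha)\,Y\,|\xi-\eta|^\alpha$ when $|\xi-\eta| \leq 1/2$. For $|\xi-\eta| > 1/2$ one uses the global oscillation bound $\sup_{\mathbf{B}}|u-u(0)| \leq Y/\alpha$, obtained from the radial integral $|u(x)-u(0)| \leq Y\int_0^{|x|}(1-t)^{\alpha-1}\,dt$, to absorb the difference into $C\,Y\,|\xi-\eta|^\alpha$.

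The main technical obstacle is the forward direction, specifically the singular integral estimate $\int_0^2 t^{n-2+\alpha}(\delta+t)^{-n}\,dt \lesssim \delta^{\alpha-1}$; everything else is a careful but routine bookkeeping of the interplay between radial distances and chord lengths near the boundary. The reverse direction is essentially just the mean value theorem along radial and chord segments, and its only subtlety is that the chord $[x_\xi,x_\eta]$ stays at distance $\geq \delta$ from $S$, which is precisely what permits the gradient bound $Y\delta^{\alpha-1}$ to be integrated without blow-up.
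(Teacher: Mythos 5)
Your proof is correct and follows essentially the same route as the paper and the references it cites: the forward inequality $Y\leq CX$ is the Hardy--Littlewood argument via $\int_S\nabla_xP(x,\xi)\,d\sigma=0$, the kernel bound $|\nabla_xP(x,\xi)|\leq C_n|x-\xi|^{-n}$ and the identity $|x-\xi|^2=(1-|x|)^2+|x|\,|\xi-\xi^\ast|^2$ (exactly the computation reproduced in the paper's proof of Theorem 3.1, with your splitting at $t=1-|x|$ replacing the paper's substitution $t=\sqrt{1-\xi_1}/(1-r)$), while the reverse inequality is the standard radial-plus-chord path integration as in the cited lemmas of Pavlovi\'c and Gjokaj--Kalaj. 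No gaps; the only cosmetic point is that your constant also depends on the (fixed) dimension $n$, as does the paper's.
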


For the proof of the second inequality see \cite[Theorem 2.1]{GjoKal}  - part of it will also be shown in the proof of Theorem \ref{teorema}; for the first see \cite[Theorem 2]{Pavlovic} and \cite[Lemma 2.5]{GjoKal}.
\\

For the proof of our theorems, it will be necessary to "control" the modulus of continuity of $f$ on $S$, uniformly (not depending on a specific $f$). Using the $C^1$ condition on the boundary of $\Omega$ and the well-known Mori's theorem (see \cite{Mori}) we get the following lemma.

\begin{lemma} \label{lema2}
Let $f$ be a $K$-quasiconformal mapping from the unit ball $\mathbf{B}$ in $\mathbf{R}^n$ onto a spatial domain $\Omega\subset\mathbf{R}^n$ with $C^1$ boundary. Then there is a constant $M=M(K,f(0),\Omega)$ and $\alpha=\alpha(K,f(0),\Omega)<1$ such that 
\[ |f(x)-f(y)| \leq M |x-y|^{\alpha}, \]
for all $x,y\in \mathbf{B}$.
\end{lemma}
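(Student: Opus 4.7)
The plan is to reduce the assertion to the well-known case of quasiconformal self-maps of the unit ball by flattening the target via the diffeomorphism provided by the $C^1$ boundary hypothesis, and then apply Mori's theorem. Since $\Omega$ has $C^1$ boundary, there exists a $C^1$ diffeomorphism $\Phi:\overline{\mathbf{B}}\to\overline{\Omega}$. On the compact set $\overline{\mathbf{B}}$ the derivative $D\Phi$ is continuous and everywhere non-degenerate, so both $\Phi$ and $\Phi^{-1}$ are Lipschitz with constants $L_\Phi$, $L_{\Phi^{-1}}$ depending only on $\Omega$, and the linear distortion of $\Phi$ is bounded by some $K_\Phi=K_\Phi(\Omega)$.

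Set $g:=\Phi^{-1}\circ f:\mathbf{B}\to\mathbf{B}$. Composition of quasiconformal mappings is quasiconformal with multiplicative dilatation, so $g$ is a $KK_\Phi$-quasiconformal homeomorphism of $\mathbf{B}$ onto itself, extending to a homeomorphism of $\overline{\mathbf{B}}$. Mori's theorem in its classical form applies to self-maps of $\mathbf{B}$ fixing the origin, so I would choose a M\"obius automorphism $T$ of $\mathbf{B}$ with $T(g(0))=0$ and set $h:=T\circ g$. Since M\"obius self-maps of $\mathbf{B}$ are $1$-quasiconformal, $h$ is $KK_\Phi$-quasiconformal from $\mathbf{B}$ onto itself, and fixes $0$. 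Mori's theorem then delivers exponents $\alpha=\alpha(n,K,\Omega)<1$ and a constant $M_0=M_0(n,K,\Omega)$ for which
\[ |h(x)-h(y)|\le M_0\,|x-y|^{\alpha}, \qquad x,y\in\overline{\mathbf{B}}. \]

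To unwind back to $f$, note $f=\Phi\circ T^{-1}\circ h$. The map $T^{-1}$ is a M\"obius automorphism of $\mathbf{B}$ sending the interior point $0$ to $g(0)=\Phi^{-1}(f(0))$; because $f(0)\in\Omega$, the image $g(0)$ lies strictly inside $\mathbf{B}$, so $T^{-1}$ has no singularity on $\overline{\mathbf{B}}$ and is in particular Lipschitz with constant $L_{T^{-1}}$ depending only on $\mathrm{dist}(g(0),S)$, hence only on $f(0)$ and $\Omega$. Combining,
\[ |f(x)-f(y)|\le L_\Phi\,L_{T^{-1}}\,|h(x)-h(y)|\le L_\Phi\,L_{T^{-1}}\,M_0\,|x-y|^\alpha, \]
which gives the lemma with $M=L_\Phi L_{T^{-1}}M_0$ and $\alpha$ as above.

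The main obstacle is precisely to make the Lipschitz constant of $T^{-1}$ on $\overline{\mathbf{B}}$ genuinely depend only on $f(0)$ and $\Omega$ and not on $f$ itself: this is what forces the normalization $f(0)=b$ to enter explicitly in the statement, and it is the reason why the constants $M$ and $\alpha$ cannot be made independent of the base point. Once this quantitative control of $T^{-1}$ near $\partial\mathbf{B}$ is secured, the rest of the argument is a routine chain of Lipschitz and quasiconformal compositions around the Mori estimate.
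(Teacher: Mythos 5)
Your proof is correct and follows essentially the same route as the paper: flatten $\Omega$ via the boundary diffeomorphism $\Phi$, apply Mori's theorem to the resulting quasiconformal self-map of the ball, and unwind through the bi-Lipschitz compositions. The only (immaterial) difference is that you normalize the origin with a M\"obius automorphism after flattening, whereas the paper simply chooses $\Phi$ with $\Phi(0)=f(0)$ so that $\Phi^{-1}\circ f$ already fixes $0$.
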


\begin{proof}
Since $\Omega$ has $C^1$ boundary, there is a diffeomorphism (up to the boundary) $\Phi:\overline{\mathbf{B}} \to \overline{\Omega}$, such that $\Phi(0)=f(0)$. The mappings $\Phi$ and $\Phi^{-1}$ are quasiconformal, since $\Phi$ is bi-Lipschitz. \\Notice that the mapping $F =\Phi^{-1}\circ f$ is a $K_1=K_1(K,\Omega,f(0))$ quasiconformal mapping of the unit ball onto itself, such that $F(0)=0$. By Mori's theorem, we have 
\[ |F(x)-F(y)| \leq M_1 |x-y|^{\alpha} \]
where $\alpha=K_1^{1/(1-n)}$ and $M_1=M_1(K_1,\Omega, f(0))$. 
Since $\Phi$ is Lipschitz continuous, it follows that $f=\Phi \circ F$ is a $\alpha$-H\"older continuous mapping with H\"older coefficient $M$ which depends on $K, f(0)$ and $\Omega$. 
\end{proof}

\section{ Proofs of Theorem \ref{teorema} and Theorem \ref{teorema2}}
\begin{proof}[Proof of Theorem \ref{teorema}]
Let $q=f(\eta_0) \in \partial \Omega$ ($\eta_0\in S$ will be determined later). Since $\Omega$ has $C^1$ boundary, postcomposing $f$ with an isometry  (isometries preserve the given properties of $f$), we can assume that $q=(0,\ldots,0)$ and that the normal vector of tangent plane at $\partial \Omega$ is $n_q=(0,\ldots,0,-1)$. This allows us to express a neighbourhood of $q=0$ in $\partial \Omega$, as a graph of a $C^{1}$ function $\Psi: B_{n-1}(0,\delta_1) \subset \mathbf{R}^{n-1}\to \mathbf{R}$, i.e. points of $\partial \Omega$ in the neighbourhood of $q=0$ are of the form
\begin{equation} \label{1} (\zeta_1,\zeta_2, \ldots, \zeta_{n-1}, \Psi(\zeta_1,\ldots, \zeta_{n-1})),
\hspace{0.3cm} \textit{$(\zeta_1,\ldots,\zeta_{n-1})\in B_{n-1}(0,\delta_1)$}
,\end{equation}
where $\Psi(0,\ldots,0)=0$ and $D_j\Psi(0,\ldots,0)=0$, for $j \in \{1,\ldots,n-1\}$.  
For every $\epsilon>0$ there is a $\delta_2$ $(0<\delta_2\leq\delta_1)$ such that 
\begin{equation} \label{2}  |\Psi(\zeta_1, \ldots, \zeta_{n-1})-\Psi(0,\ldots, 0)-\sum\limits_{j=1}^{n-1} D_j\Psi(0,\ldots,0)\zeta_j| \leq \epsilon |(\zeta_1,\ldots, \zeta_{n-1})|,\end{equation} 
for $(\zeta_1,\ldots,\zeta_{n-1}) \in B_{n-1}(0,\delta_2)$. Constant $\delta_2$ depends on $\epsilon$ and $\partial \Omega$, but since $\partial \Omega$ is a compact and $C^1$ surface, it does not depend on a specific point ($q$) of $\partial \Omega$. \\
As $f$ is K-quasiconformal, from Lemma $\ref{lema2}$ we can choose $\delta$ such that \begin{equation} \label{3} |f(\xi)-f(\eta_0)|\leq \delta_2, \end{equation} for $\xi \in S_\delta(\eta_0)=\{ \xi \in S: |\xi-\eta_0|<\delta\}$, where the constant $\delta$ depends on $\delta_2, K, f(0)$ and $\Omega$. \\ \\
From $(\ref{3})$ we have 
\[ |(f_1(\xi),\ldots, f_{n-1}(\xi))-\underbrace{(f_1(\eta_0),\ldots, f_{n-1}(\eta_0))}_\textit{$(0,\ldots,0)$}|<\delta_2, \]
for $\xi \in S_\delta(\eta_0)$, which with $(\ref{1})$ and $(\ref{2})$ implies that $f_n(\xi)=\Psi(f_1(\xi),\ldots, f_{n-1}(\xi))$ and 
\begin{equation} \label{4}
|f_n(\xi)|\leq \epsilon |(f_1(\xi),\ldots, f_{n-1}(\xi))|
\end{equation}
for $\xi\in S_\delta(\eta_0)$. 
\\
 Using the Poisson integral formula we have
\[ f_n(x)=\int\limits_{S} \frac{1-|x|^2}{|x-\xi|^n} f_n(\xi) d\sigma (\xi).\]
Observe that
\begin{equation*} \label{001}
\nabla f_n(x) =\int\limits_{S} Q(x,\xi) f_n(\xi) d\sigma (\xi),
\end{equation*}
where\footnote{For detailed steps see the proof of \cite[Theorem 2.1]{GjoKal}.}
\begin{equation} \label{002} Q(x,\xi)=\left(-2x+\frac{-n(1-|x|^2)(x-\xi) }{1+|x|^2-2\langle \xi, x \rangle}\right) \frac{1}{(1+|x|^2-2\langle \xi, x \rangle)^{\frac{n}{2}}}.\end{equation}

Let $h\in R^n$ be an arbitrary vector.
Then
\begin{equation} \label{0011}
\begin{split}
\langle \nabla f_n(x), h \rangle &=\int\limits_{S} \langle Q(x,\xi), h \rangle f_n(\xi) d\sigma (\xi).
\\
&=\int\limits_{S} \langle Q(x,\xi), h\rangle [f_n(\xi)-f_n(\eta_0)] d\sigma (\xi).
\end{split}
\end{equation}

On the other hand
\begin{equation}\label{004}
\begin{split}
& \left|-2\langle x, h\rangle+\frac{-n(1-|x|^2)\langle x-\xi, h \rangle }{1+|x|^2-2\langle \xi, x \rangle}\right| \\
& \leq 2|x||h|+n\frac{(1-|x|^2)|x-\xi||h|}{|x-\xi|^2}\leq  \\
& = 2|x||h| +2n|h| \underbrace{\frac{1-|x|}{|x-\xi|}}_\textit{$\leq 1$} \leq (2+2n)|h|.
\end{split}
\end{equation}

From  $(\ref{002}), (\ref{0011}), (\ref{004})$ we get
\begin{equation*}
|\langle \nabla f_n(x), h \rangle|\leq (2n+2)|h| \int\limits_{S} \frac{|f_n(\xi)-f_n(\eta_0)|}{|x-\xi|^n} d\sigma (\xi).
\end{equation*}
Since $h$ was taken arbitrary, then
\begin{equation} \label{15}
|\nabla f_n(x)|\leq (2n+2) \int\limits_{S} \frac{|f_n(\xi)-f_n(\eta_0)|}{|x-\xi|^n} d\sigma (\xi),
\end{equation}
for all $x\in \mathbf{B}$.
\\ Let $\alpha \in (0,1-\beta)$. We consider now 
\begin{equation} \label{5} A=\sup\limits_{x\in \mathbf{B}} (1-|x|)^{1-\alpha}|\nabla f_n(x)|. \end{equation} 
Since $f$ (and so $f_n$) belongs to the family of harmonic $\beta$- Bloch space functions, we have
\begin{equation} \sup\limits_{x\in \mathbf{B}} (1-|x|)^{\beta}|\nabla f_n(x)|<\infty. \end{equation} 
Since $(1-|x|)^{1-\alpha}|\nabla f_n(x)|=(1-|x|)^\beta|\nabla f_n(x)| (1-|x|)^{1-\alpha-\beta}$, where $1-\alpha-\beta>0$, we have that the function $(1-|x|)^{1-\alpha} |\nabla f_n(x)|$ can be extended continuously in $\overline{\mathbf{B}}$, vanishing on $S$. This means that $A$ is attained in an interior point of $\mathbf{B}$. Let this point be $r\eta_0$, where $r\in [0,1)$ and $\eta_0 \in S$ is the point considered from the begin of the proof. 
Considering $$A_j=\sup\limits_{x\in \mathbf{B}} (1-|x|)^{1-\alpha}|\nabla f_j(x)|,$$ the same conclusion can be drawn for functions $f_j$, $j\in \{1,\ldots, n-1\}$. Although, we want to get an upper bound of $A_j$, $j\in \{1,\ldots,{n-1}\}$, in term of $A$. \\
Since $f$ is a K-quasiconformal mapping we have that 
\begin{equation} \label{25} \max\limits_{|h_1|=1} |f'(x)h_1| \leq K \min\limits_{|h_2|=1} |f'(x)h_2| ,\end{equation} for all $x\in \mathbf{B}$. 
In case $f'(x)$ is a singular matrix, then (\ref{25}) implies that $f'(x)$ is a zero matrix.
On the other side, it is a well known fact that for nonsingular matrix $P$ we have
$\min\limits_{|h|=1}|Ph|^2=\min\limits_{|k|=1} |P^Tk|^2$ and $\max\limits_{|h|=1}|Ph|^2=\max\limits_{|k|=1} |P^Tk|^2$. Indeed,
\begin{equation*} \begin{split} \max\limits_{|h|=1} |Ph|^2 &=\max_{|h|=1} \langle Ph,Ph\rangle = \max\limits_{|h|=1} \langle P^TPh,h \rangle 
\\
&= \max\{ \lambda: \exists h \neq 0, P^TPh=\lambda h\} \\
& = \max\{ \lambda: \exists h \neq 0, PP^TPh=\lambda Ph \} \\
& =\max \{ \lambda: \exists k \neq 0, PP^Tk=\lambda k\} \\
&=\max\limits_{|k|=1} \langle PP^Tk,k \rangle =\max\limits_ {|k|=1} |P^Tk|^2. 
\end{split}
\end{equation*}
Analogously for "$\min$" case. In both cases (either $f'(x)$ is a singular or nonsingular matrix), combining $(\ref{25})$ and the last two identities, it follows that 
\begin{equation} \label{31} \max\limits_{|k_1|=1} |f'(x)^Tk_1|\leq K \min\limits_{|k_2|=1} |f'(x)^Tk_2|.\end{equation} 
\\Taking $k_1=e_j$ and $k_2=e_n$ in $(\ref{31})$ we get
\[ |\nabla f_j(x)| \leq K |\nabla f_n(x)|, \hspace{0.5cm} \textit{for $x\in \mathbf{B}$}, \]
which implies
$$A_j \leq K A,$$
for $j\in \{1,\ldots, n-1\}$. 
In view of Lemma $\ref{lema1}$ we have
\begin{equation} \label{6} |f_j(\xi)-f_j(\eta)|\leq AKC(\alpha)|\xi-\eta|^{\alpha}, \end{equation}
for $j\in \{1,\ldots, n-1\}$ and 
$$|f_n(\xi)-f_n(\eta)|\leq AC(\alpha)|\xi-\eta|^{\alpha},$$
for all $\xi,\eta \in S$. \\
The last two inequalities imply 
\begin{equation} \label{12}
|f(\xi)-f(\eta)|\leq AC(\alpha)\sqrt{1+K^2(n-1)} |\xi-\eta|^{\alpha},
\end{equation}
for $\xi,\eta \in S$.  \\
Our next and final goal is to prove that $A$ does not depend on $f$, but on the given parameters from the statement of the theorem. 
From $(\ref{15})$ we have
\begin{equation}\label{8}  \begin{split} 
A&=(1-r)^{1-\alpha}|\nabla f_n(r\eta_0)|\\&  \leq (2n+2) (1-r)^{1-\alpha}\int_S \frac{|f_n(\xi)-f_n(\eta_0)|}{|r\eta_0-\xi|^n}d\sigma(\xi) 
\\ & \leq (2n+2)\underbrace{\int\limits_{S_\delta(\eta_0)} \frac{(1-r)^{1-\alpha}|f_n(\xi)-f_n(\eta_0)|}{|r\eta_0-\xi|^n}d\sigma(\xi)} _\textit{$I_1$} \\&+
(2n+2) \underbrace{\int\limits_{S\backslash S_\delta(\eta_0)} \frac{(1-r)^{1-\alpha}|f_n(\xi)-f_n(\eta_0)|}{|r\eta_0-\xi|^n}d\sigma(\xi)}_\textit{$I_2$}
\end{split}
\end{equation}
For the first integral, using $(\ref{4})$, $(\ref{6})$ we get the following estimations
\begin{equation}\label{7} \begin{split}
I_1&=(1-r)^{1-\alpha}\int\limits_{S_\delta(\eta_0)} \frac{|f_n(\xi)-f_n(\eta_0)|}{|r\eta_0-\xi|^n}d\sigma(\xi)  
\\ & \leq \epsilon (1-r)^{1-\alpha}\int\limits_{S_\delta(\eta_0)} \frac{|(f_1(\xi),\ldots,f_{n-1}(\xi))|}{|r\eta_0-\xi|^n}d\sigma(\xi) 
\\& \leq \epsilon \sqrt{n-1}AKC(\alpha)(1-r)^{1-\alpha}\int\limits_{S_\delta(\eta_0)} \frac{|\xi-\eta_0|^{\alpha}}{|r\eta_0-\xi|^n}d\sigma(\xi)
\\& \leq \epsilon \sqrt{n-1}AKC(\alpha)(1-r)^{1-\alpha}\int\limits_{S} \frac{|\xi-\eta_0|^{\alpha}}{|r\eta_0-\xi|^n}d\sigma(\xi).
\end{split}
\end{equation} 
\textbf{1st case} $r=|r\eta_0|\geq \frac{1}{2}$ \\
It is obvious that the last integral does not depend on the point $\eta_0 \in S$, so we can assume that $\eta_0=(1,0,\ldots,0)$. Moreover, the last integrand function in $(\ref{7})$ depends only on the first coordinate of $\xi$, so we use the following representation \cite[Appendix A5]{Axler}:
\begin{equation} \label{19} 
\int\limits_{S} \frac{|\xi-\eta_0|^{\alpha}}{|r\eta_0-\xi|^n}d\sigma(\xi)= C_1 \int_{-1}^{1} \int\limits\limits_{S_{n-2}} \frac{(2-2\xi_1)^{\frac{\alpha}{2}}(1-\xi_1^2)^{\frac{n-3}{2}}}{((1-r)^2+r(2-2\xi_1))^{\frac{n}{2}}}  d\sigma_{n-2}(\zeta)d\xi_1,  \end{equation}
where $\sigma_{n-2}$ denotes the respective normalized surface-area measure on the unit sphere $S_{n-2}$ in $\mathbf{R}^{n-1}$. The constant $C_1$ depends on $n$ and the volumes of the unit balls in $\mathbf{R}^n$ and $\mathbf{R}^{n-1}$. 
\\The constant $\widetilde{C}$, in what follows, can change its value and depends on $\alpha$. From ($\ref{19}$), using Fubini's theorem, the assumption $r\geq \frac12$ and some elementary inequalities, it follows
\begin{equation*}
\begin{split} 
\int\limits_{S} \frac{|\xi-\eta_0|^{\alpha}}{|r\eta_0-\xi|^n}d\sigma(\xi) 
&\leq C_1 \int_{-1}^{1}  \frac{(2-2\xi_1)^{\frac{\alpha}{2}}}{(1-r)^2+r(2-2\xi_1)} \frac{2^{\frac{n-3}{2}}(1-\xi_1)^{\frac{n-3}{2}}}{{((1-r)^2+r(2-2\xi_1))^{\frac{n-2}{2}}}  } d\xi_1 \\
& = C_1 2^{\frac{\alpha}{2}} 2^{\frac{n-3}{2}} \int_{-1}^{1}  \frac{(1-\xi_1)^{\frac{\alpha-1}{2}}}{(1-r)^2+r(2-2\xi_1)} \left(\frac{1-\xi_1}{(1-r)^2+r(2-2\xi_1) }\right)^{\frac{n-2}{2}} d\xi_1
\\ & \leq
\widetilde{C} \int_{-1}^{1}  \frac{(1-\xi_1)^{\frac{\alpha-1}{2}}}{(1-r)^2+(1-\xi_1)}d\xi_1.  
\end{split} 
\end{equation*}

Using the substitution $t=\frac{\sqrt{1-\xi_1}}{1-r}$, we get
\begin{equation*} \begin{split}
\int\limits_{S} \frac{|\xi-\eta_0|^{\alpha}}{|r\eta_0-\xi|^n}d\sigma(\xi)& \leq
 (1-r)^{\alpha-1} \widetilde{C} \int_{0}^{\frac{\sqrt{2}}{1-r}}  \frac{2t^{\alpha}}{1+t^2}dt\\
&\leq (1-r)^{\alpha-1}\widetilde{C}\int_{0}^{\infty} \frac{t^{\alpha}}{1+t^2}dt.
\end{split}
\end{equation*}

Since the last integral converges we finally have
\begin{equation} \label{006} (1-r)^{1-\alpha} \int\limits_{S} \frac{|\xi-\eta_0|^{\alpha}}{|r\eta_0-\xi|^n}d\sigma(\xi) \leq \widetilde{C}, \end{equation} for $r\in [\frac{1}{2},1)$, where $\widetilde{C}$ depends on $\alpha$ only. \\[0.5cm]

\textbf{2nd case} $r< \frac{1}{2}$
\\ \\
Here the proof is quite straightforward. Since
\begin{equation*} \label{018} \frac{(1-r)^{1-\alpha}|\xi-\eta_0|^{\alpha}}{|r\eta_0-\xi|^n}=\frac{(1-r)^{1-\alpha}|\xi-\eta_0|^{\alpha}}{{((1-r)^2+r|\xi-\eta_0|^2)^{\frac{n}{2}}}}< \frac{2^{\alpha}\cdot 1}{(\frac{1}{2})^n}=2^{n+\alpha}, \end{equation*}
it follows that 
\begin{equation} \label{007}  \int\limits_{S} \frac{(1-r)^{1-\alpha}|\xi-\eta_0|^{\alpha}}{|r\eta_0-\xi|^n}d\sigma(\xi) \leq 2^{n+\alpha}.\end{equation}
\\[0.1cm]
From $(\ref{006})$ and $(\ref{007})$, we conclude that
\begin{equation} \label{11}
\int\limits_{S} \frac{(1-r)^{1-\alpha}|\xi-\eta_0|^{\alpha}}{|r\eta_0-\xi|^n}d\sigma(\xi) \leq \widetilde{C}(\alpha),
\end{equation}
for $r\in [0,1)$, which combined with $(\ref{7})$ gives 
\begin{equation} \label{35}
I_1\leq \epsilon \sqrt{n-1}AKC(\alpha)\widetilde{C}(\alpha).
\end{equation}
\\[0.5cm]
Now, let we focus on 
\[ I_2=(1-r)^{1-\alpha}\int\limits_{S\backslash S_\delta(\eta_0)} \frac{|f_n(\xi)-f_n(\eta_0)|}{|r\eta_0-\xi|^n}d\sigma(\xi) \]
from $(\ref{8})$. It follows that
\[ I_2 \leq diam(\Omega) \frac{\mu(S\backslash S_\delta(\eta_0))}{\mu(S)} \max_{\xi \in S\backslash S_\delta(\eta_0)} \frac{1}{|r\eta_0-\xi|^n} \leq  diam(\Omega) \max_{\xi \in S\backslash S_\delta(\eta_0)} \frac{1}{|r\eta_0-\xi|^n}\]\\
We want to give a lower bound for $|r\eta_0-\xi|^2$, for $\xi \in S\backslash S_\delta(\eta_0)$  in terms of $\delta$. \\
Again, without loss of generality, we can assume that $\eta_0=(1,0,\ldots,0)$. 
Since $\xi=(\xi_1,\ldots,\xi_n) \in S \backslash S_{\delta}(\eta_0)$, we have 
\[ \delta^2 \leq |\xi-\eta_0|^2=2-2\xi_1.\] 
This implies that 
\begin{equation*} \begin{split} |r\eta_0-\xi|^2&=1+r^2-2r\xi_1\geq 1+r^2+r(\delta^2-2) \\ &\geq 1+\left(\frac{2-\delta^2}{2}\right)^2+\frac{2-\delta^2}{2}(\delta^2-2) \\&=\delta^2\left(1-\frac{\delta^2}{4}\right). \end{split} \end{equation*}
It follows that 
\begin{equation} \label{9} I_2 \leq diam(\Omega)\left(\delta\sqrt{1-\frac{\delta^2}{4}}\right)^{-n}.
\end{equation}
From $(\ref{8})$, $(\ref{35})$ and $(\ref{9})$ we get 
\begin{equation} \label{21}
A \leq \epsilon(2n+2)\sqrt{n-1}AKC(\alpha)\widetilde{C}(\alpha)+(2n+2)diam(\Omega)\left(\delta\sqrt{1-\frac{\delta^2}{4}}\right)^{-n}.
\end{equation}
We can choose $\epsilon$ such that 
\[ \epsilon(2n+2)\sqrt{n-1}KC(\alpha)\widetilde{C}(\alpha) \leq \frac{1}{2},\]
so from $(\ref{21})$, we have
\begin{equation} \label{16} A \leq 2(2n+2)diam(\Omega)\left(\delta\sqrt{1-\frac{\delta^2}{4}}\right)^{-n}.\end{equation}

As mentioned before, $\delta$ depends on $K,f(0),\Omega$  and $\alpha$ (since $\epsilon$ depends on $\alpha)$.
Finally, from $(\ref{12})$ and $(\ref{16})$ we conclude that $f$ is $\alpha$-H\"older continuous on $S$ with H\"older coefficient $M$ that does not depend on $f$ or $\beta$. This, by a classical result of Dyakonov (see \cite[Lemma 4]{Dyakonov}), implies our result.

\end{proof}

\begin{proof}[Proof of Theorem \ref{teorema2}]
The proof of this theorem is analogous to the previous one. The only difference has to do with the attainment of A at $(\ref{5})$ in an interior point of $\mathbf{B}$. In this case, it can be proven in the following way. Since $f_n$ is Lipschitz continuous and smooth it follows that $|\nabla f_n(x)| < \infty$, so for every $\alpha\in (0,1)$ the function
\[ (1-|x|)^{1-\alpha}|\nabla f_n(x)| \]
can be extended continuosly in $\overline{\mathbf{B}}$, vanishing on $S$. As in the previous proof, the H\"older coefficient will depend only on $\alpha, f(0), K$ and  $\Omega$.
\end{proof}

\begin{remark} The modulus of continuity in Theorem \ref{teorema2} can be taken to be  a regular majorant $\omega=\omega(t)$ instead of $t^{\alpha}$, since they are "comparable" for some $\alpha \in (0,1)$ (see \cite[Preposition 1] {Pavlovic}). In that case, constant $M$ depends on $\omega$ instead of $\alpha$. 
\end{remark}


\begin{thebibliography}{99}
\bibitem{Arsenovic}
{\sc M.\,Arsenovi\'c,\,V. \, Bo\v{z}in, \, V.\, Manojlovi\'c: \,}\emph{Moduli of Continuity of Harmonic
Quasiregular Mappings in $B^n$.}
Potential analysis vol 34, 2011, 283-291.

\bibitem{AKM}
{\sc M.\,Arsenovi\'c,\,V. \, Koji\'c, \, M.\, Mateljevi\'c: \,}\emph{On Lipschitz continuity of harmonic quasiregular maps on the unit ball in $\mathbf{R}^n$.}
Ann. Acad. Sci. Fenn., Volumen 33, 2008, 315-318

\bibitem{Astala}
{\sc K.\,Astala,\,V. \, Manojlovi\'c: \,}\emph{On Pavlovic theorem in space.}
Potential Anal. 43, No. 3, 2015, 361--370 .

\bibitem{Axler}
{\sc S.\,Axler,\,P.\,Bourdon,\,W.\,Ramey:\,}\emph{Harmonic function theory.}
Springer-Verlag New York, 2000.

\bibitem{Dyakonov}
{\sc K.M. \,Dyakanov: \,}\emph{Equivalent norms on Lipschitz-type spaces of holomorphic
functions.}
Acta Math. 178, 1997, 143–167

\bibitem{Mori}
{\sc R. \,Fehlmann, \, M. \,Vuorinen:\,}\emph{Mori's theorem for n-dimensional quasiconformal mappings.}
Ann. Acad. Sci. Fenn. Ser. A I Math. 13, no. 1,1988, 111-124

\bibitem{GjoKal}
{\sc A.\,Gjokaj,\, D. \,Kalaj:\,} \emph{Quasiconformal harmonic mappings between the unit ball and a spatial domain with $C^{1,\alpha}$ boundary.}  
Potential Analysis, 2021,  DOI https://doi.org/10.1007/s11118-021-09919-y

\bibitem{Goluzin}
{\sc G.M.\,Goluzin:\,}\emph{Geometric function theory of a Complex Variable.}
Transl. Of Math. Monographs 26. - Providence: AMS, 1969.

\bibitem{Kalaj1}
{\sc D.\,Kalaj:\,}\emph{Quasiconformal harmonic mapping between Jordan domains.}
Math. Z. 260, No. 2, 2008, 237-252.
%
\bibitem{Kalaj2}
{\sc D.\, Kalaj:\,}\emph{Quasiconformal harmonic mappings between Dini's smooth Jordan domains.}
Pac. J. Math. 276, 2015, 213-228.
%
\bibitem{Kalaj3}
{\sc D.\, Kalaj:\,}\emph{A priori estimate of gradient of a solution to certain differential inequality and quasiconformal mappings.}
Journal d’Analyse Math. Volume 119, 2013, pp 63-88.

\bibitem{Kalaj5}
{\sc D.\, Kalaj:\,}\emph{Harmonic mappings and distance function.}
Ann. Sc. Norm. Super. Pisa, Cl. Sci. (5) 10, No. 3, 2011, 669-681.
, 
\bibitem{Kalaj6}
{\sc D.\, Kalaj:\,}\emph{Harmonic quasiconformal mappings betwen $C^{1}$ smooth Jordan domains.}
To appear in Revista Matemática Iberoamericana, arXiv:2003.03665, DOI: 10.4171/RMI/1272

\bibitem{Kalaj7}
{\sc D.\, Kalaj:\,}\emph{On harmonic quasiconformal self-mappings of the unit ball.}
Ann. Acad. Sci. Fenn., Math. Vol 33, 2008, 1-11.

\bibitem{KalajMat}
{\sc D.\,Kalaj, \, M. \, Mateljevi\'{c}:\,}\emph{(K, K')-quasiconformal harmonic mappings.}
 Potential Anal. 36(1), 2012, 117-135.

\bibitem{Manojlovic}
{\sc V.\,Manojlovi\'c:\,}\emph{Bi-Lipschicity of quasiconformal harmonic mappings in the plane.}
Filomat 23, No. 1, 2009, 85-89.

\bibitem{Martio}
{\sc O.\,Martio:\,}\emph{On harmonic quasiconformal mappings.}
Ann. Acad. Sci. Fenn., Ser. A I 425 ,1968, 3-10.

\bibitem{MartioNaki} 
{\sc O.\,Martio,\,R. \, N\"{a}kki: \,}\emph{H\"{o}lder continuity and quasiconformal mappings.}
J.London Math. Soc. (2) 44, no. 2, 1991, 339-350.

\bibitem{MatVuo}
{\sc M.\, Mateljevi\'c,\, M. \, Vuorinen: \,}\emph{On harmonic quasiconformal quasi-isometries.}
J Inequal. Appl. 2010, Article ID 178732, 19 p.

\bibitem{Partyka}
{\sc D.\, Partyka,\, K. Sakan: \,}\emph{On bi-Lipschitz type inequalities for quasiconformal harmonic mappings.}
Ann. Acad. Sci. Fenn. Math. Vol 32, 2007, pp. 579-594.

\bibitem{Pavlovic}
{\sc M.\, Pavlovi\'{c}:\,}\emph{Lipschitz conditions on the modulus of a harmonic function.}
Rev. Mat. Iberoam. 23 2007, no. 3, 831-845.

\bibitem{Pavlovic1}
{\sc M.\, Pavlovi\'{c}:\,}\emph{Boundary correspondence under harmonic quasiconformal homeomorfisms of the unit disc.}
Ann. Acad. Sci. Fenn., 27, 2002. 365-372 .

\bibitem{Vaisala}
{\sc J.\, V\"{a}is\"{a}l\"{a}:\,}\emph{Lectures on $n$-dimensional quasiconformal mappings, Lecture notes.} 
Math., 229, Srpinger-Verlag, Berlin-New York, 1971.

\bibitem{Warschawski} 
{\sc S.\, Warschawski:\,}\emph{On conformal mapping of regions bounded by smooth curves.}
Proc.Am. Math. Soc. 2, 1951, 254-261.

\bibitem{Zhu}
{\sc J.\,Zhu, \, X.Huang:\,} \emph{Estimate for Heinz's inequality in harmonic quasiconformal mappings.}
J.Math. Anal. Approx. Theory 2:1, 2007, 52-59
\end{thebibliography}
\end{document}